\documentclass[11pt,reqno]{amsart}

\usepackage[T1]{fontenc}
\usepackage[utf8]{inputenc}
\usepackage[a4paper,margin=2.8cm]{geometry}
\usepackage{amsmath,amssymb,amsfonts,amsthm,mathtools}
\usepackage{enumitem}
\usepackage{microtype}
\usepackage[hidelinks,bookmarksdepth=3]{hyperref}

\numberwithin{equation}{section}

\theoremstyle{plain}
\newtheorem{theorem}{Theorem}[section]
\newtheorem{lemma}[theorem]{Lemma}
\newtheorem{proposition}[theorem]{Proposition}
\newtheorem{corollary}[theorem]{Corollary}

\theoremstyle{definition}

\newtheorem{remark}[theorem]{Remark}

\usepackage[nameinlink,noabbrev]{cleveref}

\crefname{theorem}{Theorem}{Theorems}
\crefname{lemma}{Lemma}{Lemmas}
\crefname{proposition}{Proposition}{Propositions}
\crefname{corollary}{Corollary}{Corollaries}
\crefname{remark}{Remark}{Remarks}

\newcommand{\Prb}{\mathbb P}
\newcommand{\E}{\mathbb E}
\newcommand{\calC}{\mathcal C}
\newcommand{\calH}{\mathcal H}
\newcommand{\calG}{\mathcal G}
\newcommand{\calI}{\mathcal I}
\newcommand{\ind}{\mathrm{ind}}

\title[A sparse transference principle for a non-monotone Ramsey property]{A sparse transference principle \\for a non-monotone Ramsey property}
\author[G. Carenini]{Gaia Carenini}
\address{Trinity College Cambridge, Department of Pure Mathematics and Mathematical Statistics, Centre for Mathematical Sciences, Wilberforce Road, Cambridge CB3 0WA, United Kingdom.}
\email{gc645@cam.ac.uk}
\date{\today}

\begin{document}

\begin{abstract}
We prove a sparse transference theorem for induced Ramsey graphs.  The theorem
transfers the weighted random-host proof of Arag\~ao, Campos, Dahia, Filipe
and Marciano to the sparse random setting.  It follows that, for every fixed
graph $H$ with no isolated vertices and at least two edges, and every
$\eta>0$, there is $C>0$ such that, whenever
$N\ge r^{Cr}$ and $N^{-1/m_2(H)+\eta}\le p\le \frac12$, with high probability every $r$-colouring of the edges of $G(N,p)$ contains
a monochromatic induced copy of $H$.  Here $m_2(H)$ denotes the usual
maximum 2-density of $H$.
\end{abstract}

\maketitle
\vspace{-0.8cm}
\section{Introduction}
For graphs $F,H$ and an integer $r\ge2$, write $ F\longrightarrow_{\ind}(H)_r$ if every $r$-colouring of the edges of $F$ contains a monochromatic
induced copy of $H$.  The induced Ramsey number $R_{\ind}(H;r)$ is the
least integer $N$ for which there is an $N$-vertex graph $F$ with
$F\to_{\ind}(H)_r$.  A recent theorem of Arag\~ao, Campos, Dahia, Filipe and
Marciano gives the exponential bound
$$
        R_{\ind}(H;r)\le r^{Crh}
$$
for every graph $H$ on $h$ vertices \cite{ACDFM}.  Their proof is random in
nature: at the scale $M=r^{O_H(r)}$, a typical graph $G(M,1/2)$ is
induced-Ramsey for $H$ with $r$ colours.  More importantly for us, their
argument proves a stronger weighted statement.  In every colouring of such a
typical dense graph, some colour contains induced copies of $H$ supporting a
measure with small weighted overlaps. More precisely, these copies form a hypergraph admitting a non-zero measure
$\nu$ for which the weighted sum of squared codegrees
$$
        \sum_{\substack{S\subseteq V\\ |S|\ge2}}
        d_\nu(S)^2\rho^{-|S|}
$$
is small compared with $e(\nu)^2$, where $d_\nu(S)$ denotes the
$\nu$-mass of copies containing $S$, and $e(\nu)$ is the total mass of
$\nu$.  This is the Janson condition of Arag\~ao, Campos, Dahia, Filipe and
Marciano; we recall the precise form in \Cref{sec:weighted-input}.

The purpose of this note is to show that this extra distributional information
has a consequence in the sparse setting.  The idea of the proof is as follows.  The bare
assertion that $G(M,1/2)$ is induced-Ramsey would only allow one to use
$M$-vertex subgraphs in which no edge has been lost.  The weighted statement
is stronger: its pair-overlap consequence implies resilience under the deletion
of many edges.  Once this resilience is averaged over the $M$-vertex induced
subgraphs of a larger random graph $G(N,1/2)$, it gives a supersaturation
statement for tuples of edge sets.  Hypergraph containers then compress the
possible colour classes, and a random thinning of $G(N,1/2)$ hits the
complement of every bad tuple.

Throughout the paper $H$ is fixed, has no isolated vertices, and has
$e(H)\ge2$.  Put $ h=v(H)$, $ q=e(H)$, and define
$$
        m_2(H)=
        \max_{\substack{J\subseteq H\\ v(J)\ge3}}
        \frac{e(J)-1}{v(J)-2}.
$$
This is the usual maximum 2-density in sparse Ramsey theory.  All
high-probability statements in this paper mean that the relevant probability
tends to one as $N\to\infty$.  Our main result is the following.

\begin{theorem}[Sparse random induced Ramsey theorem]
\label{thm:main}
Let $H$ be fixed, with no isolated vertices and with $e(H)\ge2$.  For every
$\eta>0$, there is a constant $C>0$ such that the following holds.
For every $r\ge2$, if $ N\ge r^{Cr}$ and $ N^{-1/m_2(H)+\eta}\le p\le \frac12$,
then $ G(N,p)\longrightarrow_{\ind}(H)_r$ with high probability.
\end{theorem}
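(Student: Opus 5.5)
The plan follows the outline sketched in the introduction. We realise $G(N,p)$ as a two-round exposure: first take $G_0\sim G(N,1/2)$, then keep each edge of $G_0$ independently with probability $2p$. An $r$-colouring of $G=G(N,p)$ is the same thing as a partition of $E(G)$ into colour classes $E_1,\dots,E_r$. Let $\Gamma$ be the set of non-edges of $G$ inside $[N]$. A copy of $H$ is \emph{induced and monochromatic in colour $i$} precisely when its edges lie in $E_i$ and its non-edges lie in $\Gamma$.

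\textbf{Step 1 (weighted input and resilience).} Take $M=r^{O_H(r)}$. The Janson condition of Arag\~ao, Campos, Dahia, Filipe and Marciano gives the following. For a typical $G(M,1/2)$ and every colouring, some colour class carries a measure $\nu$ on induced copies of $H$ with small weighted codegrees. From the pair-overlap part of that condition we derive resilience: deleting any set of at most $\varepsilon M^2$ edges, and placing them in a single new colour, destroys only a small fraction of the $\nu$-mass, because each pair carries $\nu$-mass at most $o(e(\nu))/M^2$. So every $(r+1)$-colouring of a typical $G(M,1/2)$ in which the extra colour has at most $\varepsilon M^2$ edges has a monochromatic induced $H$ in one of the first $r$ colours.

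\textbf{Step 2 (supersaturation by averaging).} Fix a typical $G_0$; all $M$-subsets of $[N]$ induce typical $G(M,1/2)$-like graphs up to a negligible proportion. Take any tuple $(E_1,\dots,E_r)$ of edge sets of $G_0$ such that the set $D$ of uncovered edges of $G_0$ has $|D|\le\varepsilon N^2$. Averaging Step 1 over random $M$-subsets, most subsets see at most $2\varepsilon M^2$ edges of $D$. This yields $\Omega_{H,r}(N^h M^{-h})$ copies of $H$ in $G_0$ that are monochromatic in some $E_i$ and have all non-edges outside $G_0$. Here $M^{-h}=r^{-O(rh)}$, which is where $N\ge r^{Cr}$ will be used.

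\textbf{Step 3 (containers and thinning).} Work with the $q$-uniform hypergraph on $E(G_0)$ whose edges are copies of $H$ that are induced in $G_0$. Its codegree conditions match the $m_2(H)$ threshold, since for $J\subseteq H$ the count of copies through a fixed edge set is about $N^{v(H)-v(J)}$. The hypergraph container theorem then gives a family $\calC$ of containers. Every independent set, i.e.\ every colour class free of induced copies, lies in some $C\in\calC$ with few copies. Moreover $|\calC|\le \exp(O(N^{2-1/m_2(H)}\log N))$, with fingerprints of size $O(N^{2-1/m_2(H)})$. By Step 2, any $r$-tuple of containers leaves uncovered a set $D$ of edges of $G_0$ with $|D|\ge \varepsilon N^2$. (The induced condition is handled because the non-edges of $G$ are non-edges of $G_0$ together with thinned edges. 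We restrict to copies whose non-edges are already non-edges of $G_0$, which are unaffected by thinning.)

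\textbf{Step 4 (union bound).} If $G$ had a good colouring, then $E(G)$ would avoid $D$ for some tuple of containers determined by fingerprints contained in $G$. The probability that $G$ contains the fingerprints and misses $D$ is at most $p^{|F|}(1-2p)^{\varepsilon N^2}$. Summing over tuples gives at most $\exp\bigl(O(rN^{2-1/m_2}\log N)-\varepsilon pN^2\bigr)$. This tends to $0$ because $p\ge N^{-1/m_2+\eta}$ absorbs the logarithm and the $r$ factor, once $\varepsilon^{-1}\le N^{\eta/2}$, which holds for $N\ge r^{Cr}$.

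\textbf{Main obstacle.} I expect the main obstacle to be Step 2 with explicit dependence on $r$. It requires that $\varepsilon$ and the supersaturation density be only $r^{-O(r)}$. It also requires that the container codegree parameter be compatible with that density, so that the $r$-dependence can be absorbed into the exponent $C$.
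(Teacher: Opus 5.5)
\textbf{There is a gap in your Step 1.} The Janson condition does not give the pointwise bound $d_\nu(xy)\le o(e(\nu))/M^2$ that you invoke. Keeping only the $|S|=2$ terms of $\Lambda_\rho(\nu)<e(\nu)^2/(\rho M)$ gives the $\ell^2$ bound $\sum_{xy}d_\nu(xy)^2<\rho\,e(\nu)^2/M$. Pointwise, that yields only $d_\nu(xy)<(\rho/M)^{1/2}e(\nu)$. The quantity $O(e(\nu)/M^2)$ is merely the \emph{average} pair mass.

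Nothing in the condition prevents concentration. Take $\nu$ uniform over the induced copies inside a vertex set of size $m=K_H\sqrt{M/\rho}$. Then $\Lambda_\rho(\nu)=\Theta_H\bigl((m\rho)^{-2}\bigr)e(\nu)^2$, which is below $e(\nu)^2/(\rho M)$ for $K_H$ large. Deleting the $\Theta_H(M/\rho)$ edges inside that set removes all of its mass. So the weighted input only yields resilience to about $M/\rho$ deleted edges, not to $\varepsilon M^2$. Your remark that $\varepsilon$ may be $r^{-O(r)}$ does not save the claim unless in fact $\varepsilon\lesssim 1/(\rho M)$. The correct deduction is by Cauchy--Schwarz: if $|L|\le M/(4\rho)$, then $\sum_{xy\in L}d_\nu(xy)\le(|L|\rho/M)^{1/2}e(\nu)\le e(\nu)/2$.

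\textbf{How to repair it.} Run Steps 2--4 with $\varepsilon=c_H/(\rho M)$.
\begin{enumerate}[label=\textup{(\roman*)}]
\item By averaging, most $M$-sets then meet at most $M/(4\rho)$ edges of $D$. Hence every bad tuple of containers leaves $|D|\ge c_HN^2/(\rho M)$ edges of $G_0$ uncovered.
\item In Step 3 the containers must carry fewer copies than the supersaturation threshold $c_HN^h/(rM^h)$. This forces $\alpha\approx(rM^h)^{-1}$ and adds a factor $\log(1/\alpha)=O_H(r\log r)$ to $\log|\calC|$, which your bound $\exp(O(N^{2-1/m_2(H)}\log N))$ omits.
\item The union bound then needs $N^{\eta}\gg \rho M r\log N\log(rM)$. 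This holds for $N\ge r^{Cr}$, since $\rho M=r^{O_H(r)}$.
\end{enumerate}
A small slip: given $G_0$, a fingerprint survives the thinning with probability $(2p)^{|F|}$, not $p^{|F|}$. Fingerprints are not needed anyway; counting container tuples directly suffices.

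With these corrections your argument is the paper's proof: two-round exposure, resilience from the pair-overlap bound, averaging over $M$-subsets (with bounded differences), containers for copies induced in $G_0$, and a union bound over $\calC^r$.
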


The upper bound $p\le1/2$ is part of the formulation, not a technical
artefact.  It reflects the non-monotonicity discussed above: for non-complete
$H$, the conclusion cannot be extended to all larger values of $p$.
Equivalently, the theorem says that at a slightly enlarged version of the
vertex scale supplied by \cite{ACDFM}, the dense edge exponent $2$ may be
replaced by the sparse exponent $2-1/m_2(H)$, up to an arbitrary loss.

As a direct consequence we obtain the following induced size-Ramsey bound.  Let
$\widehat r_{\ind}(H;r)$ denote the minimum number of edges in a graph
$G$ satisfying $G\to_{\ind}(H)_r$.

\begin{corollary}[Size-Ramsey consequence]
\label{cor:size}
For every $\eta>0$,
$$
        \widehat r_{\ind}(H;r)
        \le r^{Cr(2-1/m_2(H)+\eta)}.
$$
Equivalently, for $N=r^{Cr}$ one can find an $N$-vertex graph
$G$ with $G\to_{\ind}(H)_r$ and
$$
        e(G)\le N^{2-1/m_2(H)+\eta}.
$$
\end{corollary}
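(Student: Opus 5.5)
The corollary follows from \Cref{thm:main} with $p$ at its lower threshold, together with a concentration bound for the number of edges. Fix $\eta>0$ and apply \Cref{thm:main} with $\eta/2$ in place of $\eta$. This gives a constant $C_0$. Set $N=r^{Cr}$ with $C\ge C_0$ and $p=N^{-1/m_2(H)+\eta/2}$.

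First check that $p\le\frac12$. Since $H$ has at least two edges and no isolated vertices, it contains a subgraph $J$ with $v(J)\ge3$ and $e(J)\ge2$. Hence $m_2(H)\ge\frac{1}{v(J)-2}>0$, and $1/m_2(H)$ is a finite positive number. For $\eta<2/m_2(H)$, which we may assume because the bound is monotone in $\eta$, we get $p\to0$ as $N\to\infty$. In particular $p\le\frac12$ once $N$ is large. The theorem then says that $G(N,p)\to_{\ind}(H)_r$ holds with probability $1-o(1)$ as $N\to\infty$.

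Next, $e(G(N,p))$ is binomial with mean $\binom N2p\le\frac12N^{2-1/m_2(H)+\eta/2}$. By Markov's inequality, $e(G(N,p))\le N^{2-1/m_2(H)+\eta}$ with probability at least $1-\frac12N^{-\eta/2}$, which tends to $1$. So for $N$ large both events hold simultaneously with positive probability, and some $N$-vertex graph $G$ satisfies $G\to_{\ind}(H)_r$ and $e(G)\le N^{2-1/m_2(H)+\eta}$. Substituting $N=r^{Cr}$ gives $\widehat r_{\ind}(H;r)\le r^{Cr(2-1/m_2(H)+\eta)}$.

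The one delicate step is that ``with high probability'' in \Cref{thm:main} is asymptotic in $N$, while here $N=r^{Cr}$ is tied to $r$. For $r$ larger than some $r_0(H,\eta)$, $N$ is as large as required, both for the theorem's probability bound to be positive and for $p\le\frac12$. This needs the theorem's failure probability to tend to zero in $N$ uniformly in $r$ within the stated range, which is how the statement of \Cref{thm:main} reads. For the finitely many $r<r_0$, I would enlarge $C$: both conditions only need $N$ to be large, so taking $N=r^{Cr}\ge2^{2C}$ with $C$ large enough covers all $r\ge2$. This changes only the constant, which is allowed to depend on $H$ and $\eta$.
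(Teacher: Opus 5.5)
Your proposal is correct and is essentially the paper's proof: apply Theorem~\ref{thm:main} with $\eta/2$ at $p=N^{-1/m_2(H)+\eta/2}$ and $N=r^{Cr}$, then bound $e(G(N,p))$ (you use Markov where the paper uses Chernoff, which makes no difference here). You also check explicitly that $p\le\frac12$ (after reducing to $\eta<2/m_2(H)$), and that the failure probability in Theorem~\ref{thm:main} must be small uniformly in $r$ once $N$ is tied to $r$; the paper's proof leaves both points implicit, and you handle them correctly.
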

\medskip
\section{The weighted dense random input}
\label{sec:weighted-input}

We begin by recalling the precise form of the weighted input needed from
\cite{ACDFM}.  The notation below is equivalent to the notation used there, but
is specialized to the present application.

Let $T$ be a graph and let $G\subseteq T$.  We write
$\calI_H(G,T)$ for the $h$-uniform hypergraph with vertex set $V(T)$ whose
hyperedges are the sets $U\in\binom{V(T)}h$ such that $T[U]=G[U]$ is
isomorphic to $H$.  Thus the hyperedges are induced copies of $H$ in
$T$ whose graph-edges all lie in $G$.

Let $\calG$ be a hypergraph on vertex set $V$.  If $\nu$ is a
non-negative measure on $E(\calG)$, write
$$
        e(\nu)=\sum_{Q\in E(\calG)}\nu(Q),
$$
and, for $S\subseteq V$,
$$
        d_\nu(S)=
        \sum_{\substack{Q\in E(\calG)\\ S\subseteq Q}}\nu(Q).
$$
For $0<\rho\le1$, set
$$
        \Lambda_\rho(\nu)=
        \sum_{\substack{S\subseteq V\\ |S|\ge2}}
        d_\nu(S)^2\rho^{-|S|}.
$$
We say that $\calG$ is $(\rho,R)$-Janson if there is a non-zero
non-negative measure $\nu$ on $E(\calG)$ such that
$$
        \Lambda_\rho(\nu)<\frac{e(\nu)^2}{R}.
$$
This condition is homogeneous in $\nu$, so $\nu$ may be normalized to have
$e(\nu)=1$.

\begin{theorem}[Weighted random-host input, extracted from \cite{ACDFM}]
\label{thm:acdfm-input}
Let $H$ be fixed, with $h=v(H)$, and let $r\ge2$.  Put $\rho=2^{-25h^2}r^{-4}$.
There are constants $A_H,b_H,B_H>0$ such that, whenever $M\ge r^{A_Hr}$, the graph
$T\sim G(M,1/2)$ has the following property with probability at least
$$
        1-\exp\{-b_H r^{-B_Hr}M^2\}.
$$
For every colouring $c:E(T)\to[r]$, there is a colour $a\in[r]$ such that,
writing $T_a$ for the colour-$a$ subgraph of $T$, the hypergraph $ \calI_H(T_a,T)$ is $(\rho,\rho M)$-Janson.  In particular, after increasing $A_H$ if
necessary, the probability is at least $0.99$.
\end{theorem}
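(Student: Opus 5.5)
This is an extraction from \cite{ACDFM}, so the plan is to trace their argument and check that it gives exactly the stated form: a Janson measure in some colour, holding with probability $1-\exp\{-b_Hr^{-B_Hr}M^2\}$.

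\textbf{Step 1: the pseudorandom property of $T$.} Their proof shows that $G(M,1/2)$ has this property except with probability $\exp\{-\Omega_H(r^{-O_H(r)}M^2)\}$. Roughly, every pair of disjoint vertex sets $X,Y$ with $|X|,|Y|\ge r^{-O(r)}M$ has edge density close to $1/2$ between them, together with the analogous statement for the boundedly many vertex sets used in an embedding. The number of choices of such sets is at most $2^{O(M)}$. For a fixed choice, the density fails with probability $\exp(-\Omega(|X||Y|))=\exp(-\Omega(r^{-O(r)}M^2))$ by Chernoff. Once $M\ge r^{A_Hr}$ the union bound is absorbed, which gives the stated tail with explicit $b_H,B_H$.

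\textbf{Step 2: the deterministic colouring argument.} Fix a colouring $c$ on a graph with the Step 1 property. Following \cite{ACDFM}, run their iterated density-increment / dependent-random-choice procedure over the $r$ colours. After $O_H(r)$ rounds it yields a colour $a$ and vertex sets $V_1,\dots,V_h$, each of size at least $r^{-O_H(r)}M$, with the following property: for $ij\in E(H)$, colour $a$ has density at least $\rho^{1/2}$ (in a robust, subset-hereditary sense) between $V_i$ and $V_j$, while non-edges of $H$ are handled by the $1/2$-density of non-edges of $T$.

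Let $\nu$ be the uniform measure on transversal induced copies $(v_1,\dots,v_h)\in V_1\times\dots\times V_h$. Then $e(\nu)$ is comparable to $\rho^{O(q)}2^{-\binom h2}\prod|V_i|$. For $|S|=s\ge2$, the codegree satisfies $d_\nu(S)\lesssim 2^{O(h^2)}\rho^{-O(1)}\prod_{i\notin S}|V_i|$.

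Hence
$$
\Lambda_\rho(\nu)\lesssim\sum_{s\ge2}\binom hs\rho^{-s}\Bigl(\prod_{i\notin S}|V_i|\Bigr)^2\prod_{i\in S}|V_i| ,
$$
and dividing by $e(\nu)^2$ gives
$$
\frac{\Lambda_\rho(\nu)}{e(\nu)^2}\lesssim\sum_{s\ge2}\rho^{-s-O(q)}2^{O(h^2)}\prod_{i\in S}|V_i|^{-1}.
$$
Every term has at least two factors $|V_i|^{-1}\le r^{O(r)}M^{-1}$. So the ratio is at most $M^{-2}r^{O(r)}\rho^{-O(h)}<(\rho M)^{-1}$ once $M\ge r^{A_Hr}$ with $A_H$ large. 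This is exactly the $(\rho,\rho M)$-Janson condition.

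\textbf{Step 3: the $0.99$ clause.} For $M\ge r^{A_Hr}$ we have $b_Hr^{-B_Hr}M^2\ge b_Hr^{(2A_H-B_H)r}$. After increasing $A_H$ this exceeds $\log 100$, so the probability is at least $0.99$.

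\textbf{Main obstacle.} The hard part is Step 2: making sure the ACDFM procedure produces uniform lower bounds on the colour-$a$ densities that hold for all sub-configurations. The codegree bound needs upper counts on copies through a fixed $S$, while $e(\nu)$ needs lower counts. I would first try to use their stated Janson lemma directly. Failing that, I would recover the lower counts by a counting lemma applied to regular pairs inside the $V_i$. The upper counts are trivial, since they only use $|V_i|$.
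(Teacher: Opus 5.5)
\textbf{There is a genuine gap: your Step 2 carries the entire content of the statement, and it is not established.}

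You assert that in every $r$-colouring of a pseudorandom $T$ there are a colour $a$ and disjoint sets $V_1,\dots,V_h$, each of size at least $r^{-O_H(r)}M$, on which colour $a$ is robustly dense along every edge of $H$ simultaneously. Your only justification is ``running'' an iterated density-increment / dependent-random-choice procedure from \cite{ACDFM}.

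\begin{itemize}
\item You give no argument for this structural claim.
\item \cite{ACDFM} do not supply such a lemma. What they prove (their Lemma 2.8, with Claim 2.9 and the union bound following it) is a probability estimate for the Janson bad event itself.
\item The claim is a substantial deterministic multicolour induced-Ramsey statement in its own right, and its quantitative form matters. The loss must be only $r^{-O_H(r)}$, because that is what allows $M\ge r^{A_Hr}$. It is not clear that a direct density-increment argument achieves this, since it must keep one colour robustly dense on all of $E(H)$ at once, with $\epsilon$ small enough for a greedy count.
\item Your fallback of regular pairs would be worse still. The regularity parameter depends on $r$ through $\rho$, so it costs far more than $r^{-O_H(r)}$.
\end{itemize}

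Your subsequent computation is fine: trivial upper bounds for $d_\nu(S)$, a greedy lower bound for $e(\nu)$, and at least two factors $|V_i|^{-1}$ whenever $|S|\ge2$. Step 3 is also fine. But these only convert a configuration you have not obtained into the Janson inequality.

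\textbf{The missing observation is that no re-derivation is needed}, because \cite{ACDFM} already state their key lemma in Janson form. The paper's proof is a dictionary between the two sets of definitions:
\begin{itemize}
\item Their Definition 2.1 hypergraph $\mathcal J_{H,T_a,T}$ consists of the sets $L$ with $H\cong T_a[L]=T[L]$. This is exactly $\calI_H(T_a,T)$.
\item Their Definition 2.2 of $(p,R)$-Janson is the one used here, and their Observation 2.4 allows the normalization.
\item With the constants of their Section 2.2, $p=2^{-25h^2}r^{-4}=\rho$.
\item Their bad event $\mathcal B(\mathbf H)$ of Definition 2.7, with $H_1=\dots=H_r=H$, is precisely the failure of the property in the statement, with $R=p\,v(T)=\rho M$.
\item The tail $\exp\{-b_Hr^{-B_Hr}M^2\}$ for $M\ge r^{Ahr}$ is then read off from their Lemma 2.8 and Claim 2.9, with $A_H=Ah$.
\end{itemize}
Your closing remark about using ``their stated Janson lemma directly'' points in this direction, but only as a fallback. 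In the paper, this dictionary together with your Step 3 is the whole proof; no pseudorandomness step or embedding argument is needed.
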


\begin{proof}[Extraction from \cite{ACDFM}]
We spell out the reduction because the statement used here is stronger than
the headline induced Ramsey bound.  In the notation of \cite{ACDFM}, Definition
2.1 defines, for a graph $F$ and graphs $G'\subseteq G$, the hypergraph
$\mathcal J_{F,G',G}$ with vertex set $V(G)$, whose hyperedges are the sets
$L\subseteq V(G)$ such that $F\cong G'[L]=G[L]$.  Thus, if $G'$ is a
colour class in an edge-colouring of $G$, this is precisely our hypergraph of
monochromatic copies of $F$ which are induced in the underlying graph $G$.
Their Definition 2.2 says that a hypergraph $\mathcal G$ is $(p,R)$-Janson
if there is a non-zero measure $\nu:E(\mathcal G)\to\mathbb R_{\ge0}$ with
$$
        \Lambda_p(\nu)<\frac{e(\nu)^2}{R},
$$
where $\Lambda_p$ is the quantity displayed above.  Observation 2.4 in
\cite{ACDFM} allows this measure to be normalized to have any prescribed
positive total mass.

In Section 2.2 of \cite{ACDFM} the authors fix $C=300$, $\delta=r^{-50}$, and $p=p(r,k)=2^{-25k^2}r^{-4}$.
Definition 2.7 defines the bad event $\mathcal B(\mathbf H)$, where
$\mathbf H=(H_i)_{i\in[r]}$, as the event that there exists an edge-colouring
of $G$ for which, for every colour $i$, the hypergraph
$\mathcal J_{H_i,G_i,G}$ is not $(p,pv(G))$-Janson.  When
$H_1=\cdots=H_r=H$, the complement of $\mathcal B(\mathbf H)$ is exactly
the assertion that every $r$-colouring has a colour $a$ for which
$\calI_H(T_a,T)$ is $(\rho,\rho M)$-Janson, with $\rho=p(r,h)$.

Lemma 2.8 of \cite{ACDFM}, together with Claim 2.9 and the union-bound
calculation following it, gives for $G\sim G(M,1/2)$ a failure probability of the form
$$
        \Prb\bigl(G\in\mathcal B(H;r)\bigr)
        \le \exp\{-b_H r^{-B_Hr}M^2\}
$$
provided $M\ge r^{Ahr}$ for a sufficiently large absolute constant $A$,
where $b_H,B_H>0$ depend only on $H$.  Writing $A_H=Ah$ gives the
displayed statement.
\end{proof}

The following elementary consequence is the only part of the Janson condition
used later.

\begin{remark}[Pair-overlap consequence]
\label{lem:pair-overlap}
Let $\calG$ be an $h$-uniform hypergraph on vertex set $V$, and suppose
that $\calG$ is $(\rho,\rho M)$-Janson.  Then there is a probability
measure $\nu$ on $E(\calG)$ such that, for
$$
        d_\nu(xy)=
        \sum_{\substack{Q\in E(\calG)\\ \{x,y\}\subseteq Q}}\nu(Q),
        \qquad xy\in\binom V2,
$$
one has
$$
        \sum_{xy\in\binom V2} d_\nu(xy)^2<\frac{\rho}{M}.
$$
\end{remark}

\begin{proof}
Choose $\nu$ with $e(\nu)=1$ and
$$
        \Lambda_\rho(\nu)<\frac{1}{\rho M}.
$$
The terms with $|S|=2$ are among the non-negative terms in
$\Lambda_\rho(\nu)$.  Hence
$$
        \rho^{-2}\sum_{xy\in\binom V2}d_\nu(xy)^2
        \le \Lambda_\rho(\nu)
        <\frac{1}{\rho M}.
$$
Multiplying by $\rho^2$ gives the claim.
\end{proof}

\section{Resilience and supersaturation}

The first step in the transference argument is that the pair-overlap estimate
makes the dense graph robust under edge deletion.

\begin{lemma}[Resilience under edge deletion]
\label{lem:resilience}
Let $T$ be an $M$-vertex graph satisfying the conclusion of
Theorem~\ref{thm:acdfm-input}, and let $L\subseteq E(T)$.  If
$$
        |L|\le \frac{M}{4\rho},
$$
then every $r$-colouring of $E(T)\setminus L$ contains a monochromatic
induced copy of $H$.
\end{lemma}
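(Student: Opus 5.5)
Given an $r$-colouring $c$ of $E(T)\setminus L$, I will extend it to a colouring $c'$ of all of $E(T)$ by giving the edges of $L$ arbitrary colours, say colour $1$. Theorem~\ref{thm:acdfm-input} then supplies a colour $a$ for which $\calI_H(T'_a,T)$ is $(\rho,\rho M)$-Janson, where $T'_a$ is the colour-$a$ class of $c'$. By Remark~\ref{lem:pair-overlap} there is a probability measure $\nu$ on the hyperedges of $\calI_H(T'_a,T)$ with
$$
\sum_{xy\in\binom{V(T)}2} d_\nu(xy)^2<\frac{\rho}{M}.
$$
Each hyperedge $U$ is an induced copy of $H$ in $T$, and all of its edges have colour $a$ under $c'$. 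If no edge of $T[U]$ lies in $L$, then $T[U]$ is a monochromatic induced copy of $H$ for the original colouring $c$; inducedness is measured in $T$, and $T$ is unchanged. So the goal is to show that the $\nu$-mass of copies meeting $L$ is less than $1=e(\nu)$.

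A copy $U$ meets $L$ only if $\{x,y\}\subseteq U$ for some $xy\in L$. A union bound gives
$$
\nu\bigl(\{U: E(T[U])\cap L\neq\emptyset\}\bigr)\le \sum_{xy\in L} d_\nu(xy).
$$
Applying Cauchy--Schwarz over the set $L$ gives
$$
\sum_{xy\in L} d_\nu(xy)\le |L|^{1/2}\Bigl(\sum_{xy} d_\nu(xy)^2\Bigr)^{1/2}< \Bigl(\frac{M}{4\rho}\cdot\frac{\rho}{M}\Bigr)^{1/2}=\frac12<1.
$$
Hence some copy $U$ with $\nu(U)>0$ avoids $L$. That copy is induced in $T$, and all its graph-edges lie in $E(T)\setminus L$ and have colour $a$ under $c$, so it is the required monochromatic induced copy of $H$.

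There is no real obstacle. The only point needing care is that the Janson property is applied to the extended colouring $c'$ of the full graph $T$, because the conclusion of Theorem~\ref{thm:acdfm-input} quantifies over colourings of $E(T)$. Inducedness is relative to $T$, and deleting $L$ does not change which vertex sets span induced copies in $T$. So a copy avoiding $L$ is genuinely induced in $T$ with every edge colour-$a$ under $c$, and this is exactly the form needed when $T$ later appears as an $M$-vertex induced subgraph of the larger random graph.
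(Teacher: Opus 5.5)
Your proof is correct and follows the paper's argument essentially step for step. You extend the colouring arbitrarily, take the normalized measure from the pair-overlap remark, bound the mass of copies meeting $L$ by $\sum_{xy\in L}d_\nu(xy)$, and apply Cauchy--Schwarz to get at most $1/2$. The surviving copy is induced in the deleted graph as well as in $T$, since it spans no edge of $L$ and hence spans the same edges in both.
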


\begin{proof}
Extend the colouring of $E(T)\setminus L$ arbitrarily to a colouring of all
of $E(T)$.  By Theorem~\ref{thm:acdfm-input} and Lemma~\ref{lem:pair-overlap}, there is
a colour $a$ and a probability measure $\nu$ on colour-$a$ induced copies
of $H$ in $T$ such that
$$
        \sum_{xy\in\binom{V(T)}2}d_\nu(xy)^2<\frac{\rho}{M}.
$$
The total $\nu$-mass of copies using at least one edge of $L$ is at most
$$
        \sum_{xy\in L}d_\nu(xy).
$$
By Cauchy's inequality,
$$
        \sum_{xy\in L}d_\nu(xy)
        \le |L|^{1/2}
        \left(\sum_{xy\in L}d_\nu(xy)^2\right)^{1/2}
        \le \left(|L|\frac{\rho}{M}\right)^{1/2}
        \le \frac12.
$$
Thus positive $\nu$-mass remains on colour-$a$ induced copies avoiding
$L$.  Such a copy is induced in $T$, and all its graph-edges lie in
$E(T)\setminus L$, so it is a monochromatic induced copy in the deleted
graph.
\end{proof}

We now average this statement over $M$-vertex subsets of a larger dense
random graph.

\begin{lemma}[Supersaturation in the dense random host]
\label{lem:supersaturation}
Let $M\ge r^{A_Hr}$, let $N\ge M$, and let $F\sim G(N,1/2)$.  With
probability at least $1-\exp\{-cN^2/M^4\}$, 
where $c>0$ is absolute, the following holds.  For every
$C_1,\ldots,C_r\subseteq E(F)$, if
$$
        |E(F)\setminus(C_1\cup\cdots\cup C_r)|
        \le c_H\frac{N^2}{\rho M},
$$
then some $C_a$ contains at least
$$
        c_H\frac{N^h}{rM^h}
$$
labelled induced copies of $H$ in $F$.  Here $c_H>0$ depends only on
$H$.
\end{lemma}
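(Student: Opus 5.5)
Plan: average over $M$-subsets of $V(F)$. Call a set $U\in\binom{V(F)}{M}$ \emph{good} if $F[U]$ satisfies the conclusion of Theorem~\ref{thm:acdfm-input}. The first step is to show that, with probability at least $1-\exp\{-cN^2/M^4\}$, at least a $0.9$-fraction of all $M$-subsets are good. Each $F[U]$ is distributed as $G(M,1/2)$, so each $U$ is good with probability at least $0.99$. For concentration I would not use $M$-sets directly but a partition argument. Take a uniformly random equipartition-style family: fix a decomposition of $V(F)$ into roughly $N/M$ disjoint blocks of size $M$, and average over permutations of the vertices. For a fixed permutation, the induced graphs on the blocks are independent, so Chernoff shows that fewer than a $0.05$-fraction of blocks are bad with probability at least $1-\exp\{-\Omega(N/M)\}$. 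That bound is not enough to union over permutations, so instead I would use the much stronger individual bound $\exp\{-b_Hr^{-B_Hr}M^2\}$. Alternatively, apply a bounded-differences (edge-exposure) martingale to the fraction $X$ of bad $M$-subsets. Changing one edge $xy$ affects only the $M$-sets containing both $x$ and $y$, a fraction $\binom{M}{2}/\binom N2\approx M^2/N^2$. Azuma over $\binom N2$ edges then gives $\Prb(X\ge \E X+0.05)\le\exp\{-c\,(0.05)^2/(\binom N2 (M^2/N^2)^2)\}=\exp\{-c'N^2/M^4\}$. This is exactly the stated exponent, so the martingale route is the intended one, with $\E X\le 0.01$.

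Second step, on this event: fix $C_1,\dots,C_r$ and let $L=E(F)\setminus\bigcup C_a$, with $|L|\le c_HN^2/(\rho M)$. A uniformly random $M$-set $U$ satisfies $\E|L\cap\binom U2|=|L|\binom M2/\binom N2\le 2c_HM/\rho$. By Markov, choosing $c_H\le 1/80$ makes $|L\cap E(F[U])|\le M/(4\rho)$ for at least a $0.9$-fraction of $U$. Hence at least a $0.8$-fraction of $U$ are good with small $L$-intersection. Colour each edge of $F[U]$ outside $L$ by some $a$ with the edge in $C_a$; if the classes overlap, pick arbitrarily, which only shrinks the colour classes. Lemma~\ref{lem:resilience} then yields a monochromatic induced copy of $H$ in $F[U]$ avoiding $L$. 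Since $F[U]$ is an induced subgraph of $F$, this is an induced copy in $F$ whose edges all lie in some $C_a$.

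Third step, double counting: there are at least $0.8\binom NM$ pairs $(U,\text{copy})$ with the copy in some $C_{a(U)}$. By pigeonhole, some colour $a$ is chosen by at least $0.8\binom NM/r$ sets $U$. Each vertex set $W$ of an induced copy of $H$ in $F$ lies in $\binom{N-h}{M-h}$ sets $U$. The number of distinct copy vertex sets in $C_a$ is therefore at least $0.8\binom NM/(r\binom{N-h}{M-h})\ge 0.8(N/M)^h/r$. Each set gives at least one labelled copy, so choosing $c_H\le 0.8$ (together with the earlier constraint) gives the claim. One subtlety: a copy of $H$ being in $C_a$ means all its graph-edges lie in $C_a$, which holds by construction. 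The main obstacle is the concentration step, namely getting the fraction of good $M$-sets uniformly with the precise exponent $N^2/M^4$. I expect the edge-exposure martingale to handle this, with the Lipschitz constant $\approx M^2/N^2$ being the key computation.
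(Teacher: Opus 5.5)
Your proposal is correct and follows the paper's proof essentially step for step. The steps are: the edge-exposure bounded-differences bound with per-edge Lipschitz effect $\binom{N-2}{M-2}$ (your fraction $\binom M2/\binom N2$), giving $\exp\{-cN^2/M^4\}$; Markov's inequality to make most $M$-sets light; Lemma~\ref{lem:resilience} on good light sets; and the double count using that each copy lies in at most $\binom{N-h}{M-h}$ of the $M$-sets. The only differences are cosmetic (you pigeonhole on the colour before double counting rather than summing the $X_a$ first), and the partition/permutation detour in your first paragraph is unnecessary and can simply be dropped.
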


\begin{proof}
Call a set $S\in\binom{V(F)}M$ good if $F[S]$ satisfies the conclusion of
Theorem~\ref{thm:acdfm-input}.  For each fixed $S$, the graph $F[S]$ has
distribution $G(M,1/2)$, so
$$
        \E |\{S:S\text{ is good}\}|
        \ge .99\binom NM.
$$
Changing one edge of $F$ affects at most $\binom{N-2}{M-2}$ of the
indicators $1_{\{S\text{ good}\}}$.  The bounded-differences inequality
therefore gives
$$
        \Prb\left[|\{S:S\text{ is good}\}|<.9\binom NM\right]
        \le \exp\{-cN^2/M^4\}
$$
for an absolute constant $c>0$.  Condition on the complementary event.

Let $L=E(F)\setminus(C_1\cup\cdots\cup C_r)$ and assume
$$
        |L|\le c_H\frac{N^2}{\rho M}.
$$
Call $S\in\binom{V(F)}M$ light if
$$
        |L\cap E(F[S])|\le \frac{M}{4\rho}.
$$
Averaging over $S$ gives
$$
        \E_S |L\cap E(F[S])|
        = |L|\frac{\binom{N-2}{M-2}}{\binom NM}
        \le 2|L|\frac{M^2}{N^2}
        \le 2c_H\frac{M}{\rho}.
$$
If $c_H$ is chosen small enough, Markov's inequality
implies that at least $.9\binom NM$ sets are light.  Hence at least
$.8\binom NM$ sets are both good and light.

Fix such an $S$.  Colour every edge of $F[S]\setminus L$ by one colour
$a$ for which it lies in $C_a$.  Since $S$ is good and light,
Lemma~\ref{lem:resilience} gives a monochromatic induced copy of $H$ in
$F[S]\setminus L$.  If its colour is $a$, then all graph-edges of the copy
lie in $C_a$.

Choose one labelled copy obtained in this way for every good light set $S$.
A fixed labelled induced copy of $H$ is contained in at most
$\binom{N-h}{M-h}$ choices of $S$.  If $X_a$ is the number of labelled
induced copies of $H$ in $F$ whose graph-edges all lie in $C_a$, then
$$
        \sum_{a=1}^r X_a
        \ge
        \frac{.8\binom NM}{\binom{N-h}{M-h}}
        \ge c_H\frac{N^h}{M^h}.
$$
Thus some $a$ satisfies $X_a\ge c_HN^h/(rM^h)$, as required.
\end{proof}

We shall use the following contrapositive form.

\begin{corollary}[Tuple supersaturation]
\label{cor:tuple-supersaturation}
Under the hypotheses of Lemma~\ref{lem:supersaturation}, with probability at least
$1-\exp\{-cN^2/M^4\}$, the graph $F\sim G(N,1/2)$ has the following
property.  If $C_1,
\ldots,C_r\subseteq E(F)$ and each $C_a$ contains fewer than
$c_HN^h/(rM^h)$ labelled induced copies of $H$, then
$$
        |E(F)\setminus(C_1\cup\cdots\cup C_r)|
        \ge c_H\frac{N^2}{\rho M}.
$$
\end{corollary}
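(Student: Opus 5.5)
This is the contrapositive of Lemma~\ref{lem:supersaturation}, applied on the same high-probability event. We work on the event from that lemma. Its probability is at least $1-\exp\{-cN^2/M^4\}$, and we use the same absolute constant $c$ and the same $c_H$. The event is defined purely in terms of $F$: the number of good sets $S\in\binom{V(F)}M$ is at least $.9\binom NM$. Its conclusion is quantified over all tuples $C_1,\ldots,C_r\subseteq E(F)$, so no further union bound over tuples is needed. This uniformity is the only point requiring care. The tuple may depend on $F$, and the proof of Lemma~\ref{lem:supersaturation} handles this because, after conditioning on the event, it argues deterministically for an arbitrary tuple.

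Now fix $F$ in this event and a tuple $C_1,\ldots,C_r\subseteq E(F)$ in which every $C_a$ contains fewer than $c_HN^h/(rM^h)$ labelled induced copies of $H$. Suppose for contradiction that
$$
        |E(F)\setminus(C_1\cup\cdots\cup C_r)|< c_H\frac{N^2}{\rho M}.
$$
Then in particular the hypothesis $|E(F)\setminus(C_1\cup\cdots\cup C_r)|\le c_HN^2/(\rho M)$ of Lemma~\ref{lem:supersaturation} holds. The lemma then yields a colour $a$ for which $C_a$ contains at least $c_HN^h/(rM^h)$ labelled induced copies of $H$ in $F$. This contradicts the assumption on the tuple, so the displayed lower bound $|E(F)\setminus(C_1\cup\cdots\cup C_r)|\ge c_HN^2/(\rho M)$ follows.

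There is no real obstacle. The one thing to check is that the strict inequality "fewer than" in the corollary matches the non-strict "at least" in the lemma, and it does. Together with the uniformity over tuples noted above, this gives the corollary exactly as stated.
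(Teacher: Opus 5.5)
Your proposal is correct and follows the paper's route exactly: the paper states this corollary as the contrapositive of Lemma~\ref{lem:supersaturation}, on the same high-probability event, and gives no separate proof. You are also right that the only point to check is that the lemma's conclusion already holds uniformly over all tuples $C_1,\ldots,C_r\subseteq E(F)$, so no extra union bound is needed, and your argument handles this correctly.
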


\section{Containers for induced-copy-free edge sets}

Let $F$ be a graph on $N$ vertices.  Define a $q$-uniform hypergraph
$\calH_F$ as follows.  The vertex set of $\calH_F$ is $E(F)$.  The
hyperedges of $\calH_F$ are the edge sets of induced copies of $H$ in
$F$.  An edge set $I\subseteq E(F)$ is independent in $\calH_F$ precisely
when $I$ contains no induced copy of $H$ whose non-edges are also non-edges
of $F$.

We use the following standard form of the hypergraph container theorem.

\begin{theorem}[Hypergraph containers]
\label{thm:containers}
For every integer $q\ge2$ and every $A\ge1$, there are constants
$C=C(q,A)>0$ and $\tau_0=\tau_0(q,A)>0$ with the following property.  Let
$\calG$ be a $q$-uniform hypergraph on $n$ vertices with average degree
$d$.  Suppose that $0<\tau<\tau_0$ and
$$
        \Delta_j(\calG)\le A d\tau^{j-1}
        \qquad\text{for every }2\le j\le q.
$$
Then, for every $0<\alpha<1$, there is a family
$\calC\subseteq2^{V(\calG)}$ such that:
\begin{enumerate}[label=\textup{(\arabic*)}]
\item every independent set of $\calG$ is contained in some $C\in\calC$;
\item every $C\in\calC$ satisfies $e(\calG[C])\le \alpha e(\calG)$;
\item
$\log |\calC|\le Cn\tau\log(1/\tau)\log(1/\alpha)$.

\end{enumerate}
\end{theorem}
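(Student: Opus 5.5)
This is a standard result, and I would not reprove it from scratch. The plan is to deduce the stated form from the container theorem of Saxton and Thomason (equivalently, of Balogh, Morris and Samotij), checking that the hypotheses and the three conclusions match the formulation used here.

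\emph{Single-step lemma.} The basic input is the following. Let $\calG$ be $q$-uniform on $n$ vertices with average degree $d$ and $\Delta_j(\calG)\le Ad\tau^{j-1}$ for $2\le j\le q$. Then there is a constant $c=c(q,A)$ and a map $I\mapsto(T(I),C(T(I)))$, defined on independent sets $I$, with the following properties:
\begin{enumerate}[label=\textup{(\roman*)}]
\item $T(I)\subseteq I\subseteq C(T(I))$;
\item $|T(I)|\le c\tau n$;
\item $e(\calG[C])\le(1-c^{-1})e(\calG)$ for every container $C$.
\end{enumerate}
The container depends only on the fingerprint $T$, which is the source of the counting bound. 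This comes out of the usual greedy algorithm, run on a sequence of auxiliary hypergraphs of decreasing uniformity $q,q-1,\dots,2$. At each stage one repeatedly selects a vertex of $I$ of maximum degree and records it in $T$. One then discards vertices that have become too heavy, namely those whose codegree with the current fingerprint exceeds the threshold $\Delta_j\lesssim d\tau^{j-1}$. The codegree hypothesis is exactly what guarantees the algorithm stops after $O(\tau n)$ steps while a constant fraction of the edges is lost. Counting fingerprints gives at most $\sum_{s\le c\tau n}\binom ns\le\exp\{c'\tau n\log(1/\tau)\}$ containers; here $\tau<\tau_0$ is used so that $c\tau n\le n/2$ and the binomial estimate applies.

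\emph{Iteration.} I would next apply the single-step lemma repeatedly to $\calG[C]$, as long as $e(\calG[C])>\alpha e(\calG)$. Each round multiplies the edge count by at most $1-c^{-1}$, so after $k=O(\log(1/\alpha))$ rounds every surviving container satisfies conclusion (2). The total number of containers is then at most $\exp\{k\,c'\tau n\log(1/\tau)\}$, which is conclusion (3) after enlarging $C$. Conclusion (1) holds because each independent set is followed down a branch of the iteration.

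\emph{Main obstacle.} The delicate point is preserving the codegree hypothesis along the iteration. Passing to $\calG[C]$ leaves each $\Delta_j$ no larger, but it lowers the average degree to $d'=d\,e(\calG[C])/e(\calG)\ge\alpha d$, where I keep the vertex set of size $n$. Naively this forces $\tau$ to be replaced by something like $\tau\alpha^{-1/(j-1)}$, which would put an $\alpha$-dependence into the $\log(1/\tau)$ factor.

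I would try first to use the Saxton--Thomason formulation, in which the hypothesis is phrased through the co-degree function $\delta(\calG,\tau)$ and degrees are measured relative to a fixed reference degree $d$ rather than the current average degree. In that formulation the single-step lemma only needs $\Delta_j(\calG[C])\le Ad\tau^{j-1}$ together with $e(\calG[C])\ge\alpha e(\calG)$, at the cost of constants depending on $\alpha$ only through the number of rounds.

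If this bookkeeping does not close cleanly, the fallback is to quote Saxton--Thomason (Corollary 3.6) or Balogh--Morris--Samotij (Theorem 2.2) directly and verify that their hypotheses are implied by $\Delta_j\le Ad\tau^{j-1}$. The dependence on $\alpha$ in their bound is at worst polynomial in $\log(1/\alpha)$. Any such dependence is harmless in the later application, where $\alpha$ is a fixed function of $r$ and $H$.
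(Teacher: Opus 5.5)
There is a genuine gap at exactly the step you flagged as the main obstacle, and it cannot be closed. With $C=C(q,A)$ independent of $\alpha$ and only a $\log(1/\alpha)$ loss, the statement is false.

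Take $q=2$ and let $\calG$ be the disjoint union of $n/(4D)$ copies of $K_{D,D}$ and a perfect matching on the remaining $n/2$ vertices. Then $e(\calG)=n(D+1)/4$, $d=(D+1)/2$ and $\Delta_2(\calG)=1$. So the hypothesis holds with $\tau=1/(Ad)$, which is below $\tau_0$ once $D$ is large. Put $\alpha=1/(4D)$, so that $\alpha e(\calG)\le n/8$.

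Each of the $2^{n/4}$ sets consisting of one endpoint of every matching edge is independent. A set $C$ with $e(\calG[C])\le n/8$ contains both ends of at most $n/8$ matching edges, and hence contains at most $2^{n/8}$ of these sets. Therefore $\log|\calC|\ge (n/8)\log 2$, whereas conclusion (3) promises $\log|\calC|=O_{A,C}(n\log^2 D/D)$. Replacing the matching by copies of $K_{b,b}$ with $b\asymp \alpha D$ shows that, relative to $n\tau$, a factor of order $1/\alpha$ cannot be avoided in general.

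Both of your proposed remedies rest on incorrect readings of the literature.

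\emph{Saxton--Thomason.} In their Corollary~3.6 the codegree hypothesis is of the form $\delta(\calG,\tau)\le c_q\alpha$, so $\alpha$ sits inside the hypothesis. Measuring degrees against a fixed reference $d$ does not make each round remove a constant fraction of edges once $e(\calG[C])$ is comparable to $\alpha e(\calG)$.

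\emph{Checking their hypotheses directly.} From $\Delta_j\le Ad\tau^{j-1}$ you only get $\delta(\calG,\tau')\le c_qA\tau/\tau'$ for $\tau'\ge\tau$. You must therefore apply the corollary with $\tau'\asymp_{q,A}\tau/\alpha$. This yields $\log|\calC|\le C(q,A)\,\alpha^{-1}\log(1/\alpha)\,n\tau\log(1/\tau)$ for $\tau\le c(q,A)\alpha$, which is polynomial in $1/\alpha$, not in $\log(1/\alpha)$. Note also that for $j=2$ the naive rescaling $\tau\mapsto\tau/\alpha$ multiplies the main factor $n\tau$ by $1/\alpha$, not merely the logarithm.

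\emph{Balogh--Morris--Samotij.} Their theorem has an unspecified dependence on $\varepsilon$, so it does not give $\log(1/\alpha)$ either.

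The paper itself gives no proof of this statement; it simply quotes it as a standard form, and that form overstates what the standard theorems provide. Your closing remark is the right repair, but it establishes a weaker statement than the one displayed. With the $\alpha^{-1}\log(1/\alpha)$ version, Proposition~\ref{prop:containers} and Theorem~\ref{thm:thinning} pick up an extra factor $1/\alpha=O_H(rM^h)=r^{O_H(r)}$ in $\log|\calC|$ and in the threshold for $\theta$. They also need $N^{-1/m_2(H)}\le c\,\alpha$. All of this is absorbed by $N^\eta$ after enlarging the constant in $N\ge r^{Cr}$. So the paper's main theorem survives, but the container statement as written does not.
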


\begin{lemma}[Codegrees in a dense random host]
\label{lem:codegrees}
With high probability, $F\sim G(N,1/2)$ satisfies:
\begin{enumerate}[label=\textup{(\arabic*)}]
\item $|E(F)|=\Theta(N^2)$;
\item $e(\calH_F)=\Theta_H(N^h)$;
\item the average degree of $\calH_F$ is $d=\Theta_H(N^{h-2})$;
\item if $\tau=N^{-1/m_2(H)}$, then, there exists a constant $A_H$,
$ \Delta_\ell(\calH_F)\le A_H d\tau^{\ell-1}$ for every $2\le \ell\le q$.
\end{enumerate}
\end{lemma}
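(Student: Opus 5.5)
Parts (1)--(3) come from first- and second-moment estimates; part (4) comes from a deterministic bound on $\Delta_\ell$ combined with the lower bound on $d$.

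For (1), $|E(F)|\sim\mathrm{Bin}(\binom N2,1/2)$, and Chernoff gives $|E(F)|=(1/4+o(1))N^2$ with high probability.

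For (2), let $X$ be the number of $h$-sets $U$ with $F[U]\cong H$. Then $\E X=\Theta_H(N^h)$, since each fixed $U$ spans a copy of $H$ with probability $|\mathrm{Aut}|$-dependent but at least $2^{-\binom h2}$. We have $e(\calH_F)=\Theta(X)$, because each hyperedge corresponds to at most $h!$ sets $U$ and conversely. Changing one edge of $F$ changes $X$ by at most $\binom{N-2}{h-2}$. McDiarmid's inequality over the $\binom N2$ edge variables then gives deviations of order $N^{h-1}$, hence concentration: $\Prb(|X-\E X|>\epsilon N^h)\le \exp(-c\epsilon^2N^2)$.

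For (3), since $d=q\,e(\calH_F)/|E(F)|$, parts (1) and (2) give $d=\Theta_H(N^{h-2})$.

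For (4), use the trivial deterministic bound. Fix a set $D$ of $\ell\ge2$ edges of $F$, and let $J$ be the graph they span, with $v_J$ vertices. A hyperedge containing $D$ is determined by an $h$-set $U\supseteq V(J)$ together with an identification of $J$ with a subgraph of $H$. So
$$
\Delta_\ell(\calH_F)\le \max_{J} O_H\bigl(N^{h-v(J)}\bigr),
$$
where $J$ ranges over subgraphs of $H$ with $\ell$ edges and no isolated vertices. We need $N^{h-v(J)}\le A_H N^{h-2}N^{-(\ell-1)/m_2(H)}$, which is equivalent to $(e(J)-1)/m_2(H)\le v(J)-2$.

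For $v(J)\ge3$ this is the definition of $m_2(H)$. The only remaining case is $v(J)=2$, but then $\ell=1$, which is excluded since $\ell\ge2$. Note that $J$ may be disconnected; this is harmless because the defining maximum in $m_2(H)$ is over all subgraphs $J\subseteq H$ with $v(J)\ge3$, so the inequality still holds. Combining with the lower bound $d\ge c_HN^{h-2}$ from (3) gives (4) on the same high-probability event.

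The only mildly delicate step is the concentration in (2), which is handled by bounded differences as above. Everything else is routine.
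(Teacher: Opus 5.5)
Your proposal is correct and matches the paper's proof: concentration gives (1)--(2), the identity $d=q\,e(\calH_F)/|E(F)|$ gives (3), and for (4) you use the same deterministic bound $\Delta_\ell(\calH_F)\le O_H\bigl(\max_J N^{h-v(J)}\bigr)$ over $\ell$-edge subgraphs $J\subseteq H$, with $v(J)\ge3$ and the definition of $m_2(H)$ supplying the exponent; you additionally spell out the bounded-differences step that the paper calls standard. One small correction: because $H$ has no isolated vertices, each hyperedge of $\calH_F$ comes from exactly one $h$-set $U$, so in fact $e(\calH_F)=X$, and the factor $h!$ is relevant only when counting labelled copies.
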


\begin{proof}
The first two assertions follow from the usual concentration estimates for the
number of edges and for the number of induced copies of a fixed graph in
$G(N,1/2)$.  They imply the third, since
$$
        d=\frac{qe(\calH_F)}{|E(F)|}=\Theta_H(N^{h-2}).
$$

For the codegree estimate, fix a set $\sigma\subseteq E(F)$ of $\ell$ host
edges.  If $\sigma$ is contained in no induced copy of $H$, then its
codegree is zero.  Otherwise, in every induced copy containing $\sigma$, the
edges of $\sigma$ correspond to an $\ell$-edge subgraph $J\subseteq H$.
For a fixed such $J$, once the images of the vertices incident with these
$\ell$ edges are fixed, the remaining vertices of the copy can be chosen in
at most $O_H(N^{h-v(J)})$ ways.  Hence
$$
        \Delta_\ell(\calH_F)
        \le C_H\max_{\substack{J\subseteq H\\ e(J)=\ell}}N^{h-v(J)}.
$$
Since $\ell\ge2$, every contributing $J$ has $v(J)\ge3$.  By the
definition of $m_2(H)$, $\ell-1\le m_2(H)(v(J)-2)$,
and therefore $ N^{h-v(J)}\le N^{h-2-(\ell-1)/m_2(H)}$. Together with $d=\Theta_H(N^{h-2})$, this gives the required bound.
\end{proof}

\begin{proposition}[Containers in the dense random host]
\label{prop:containers}
With high probability, $F\sim G(N,1/2)$ has the following property.  For each
$0<\alpha<1$, there is a family $\calC\subseteq2^{E(F)}$ such that:
\begin{enumerate}[label=\textup{(\arabic*)}]
\item every independent set in $\calH_F$ is contained in some $C\in\calC$;
\item every $C\in\calC$ contains at most $\alpha e(\calH_F)$ hyperedges of
$\calH_F$;
\item
$\log |\calC|
        \le C_HN^{2-1/m_2(H)}\log N\log(1/\alpha).$
\end{enumerate}
Moreover, since $H$ has no isolated vertices, each hyperedge of $\calH_F$
corresponds to at most $h!$ labelled induced copies of $H$.
\end{proposition}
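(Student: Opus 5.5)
The plan is to apply Theorem~\ref{thm:containers} to $\calG=\calH_F$ on the high-probability event of Lemma~\ref{lem:codegrees}, with $q=e(H)$, $n=|E(F)|$, $\tau=N^{-1/m_2(H)}$, and $A$ the constant $A_H$ supplied by part (4) of that lemma. The first point to settle is that all hypotheses of the container theorem hold. The hypergraph $\calH_F$ is $q$-uniform: each hyperedge is the edge set of an induced copy of $H$, which has exactly $q$ edges. Lemma~\ref{lem:codegrees}(1) gives $n=\Theta(N^2)$, and Lemma~\ref{lem:codegrees}(3) gives the average degree $d=\Theta_H(N^{h-2})$. Lemma~\ref{lem:codegrees}(4) gives $\Delta_\ell(\calH_F)\le A_Hd\tau^{\ell-1}$ for $2\le\ell\le q$. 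Since $m_2(H)>0$ (because $e(H)\ge2$), we have $\tau=N^{-1/m_2(H)}\to0$. Hence $\tau<\tau_0(q,A_H)$ for all sufficiently large $N$, which is permitted because the statement is only claimed with high probability as $N\to\infty$.

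The theorem then supplies, for each $0<\alpha<1$, a family $\calC$ with the following properties. Every independent set of $\calH_F$ lies in some member of $\calC$, which gives item (1). Every $C\in\calC$ satisfies $e(\calH_F[C])\le\alpha e(\calH_F)$, which gives item (2), since $\calH_F[C]$ consists exactly of the hyperedges contained in $C$. Finally,
$$
\log|\calC|\le C(q,A_H)\,n\,\tau\log(1/\tau)\log(1/\alpha).
$$
For item (3), substitute $n\le c'N^2$ and $\tau\log(1/\tau)=N^{-1/m_2(H)}\cdot m_2(H)^{-1}\log N$. This gives $\log|\calC|\le C_HN^{2-1/m_2(H)}\log N\log(1/\alpha)$, where $C_H$ absorbs $C(q,A_H)$, $c'$ and $1/m_2(H)$; these depend only on $H$, as $q$ and $A_H$ do. The good event of Lemma~\ref{lem:codegrees} does not depend on $\alpha$, so a single high-probability event serves all $\alpha$ simultaneously.

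For the ``moreover'' clause, fix a hyperedge of $\calH_F$, that is, an edge set $E_0\subseteq E(F)$. Because $H$ has no isolated vertices, the vertex set $U$ of any induced copy with edge set $E_0$ equals the union of the endpoints of $E_0$, so $U$ is determined by $E_0$. A labelled induced copy with this edge set is then a bijection $V(H)\to U$ that maps $H$ onto $F[U]$. There are at most $h!$ such bijections.

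The only step needing real care is matching the codegree normalisation of Lemma~\ref{lem:codegrees}(4) with that of Theorem~\ref{thm:containers}, and absorbing the $\log(1/\tau)$ factor into the $\log N$ term. Both are bookkeeping; beyond this, the proposition is a direct corollary.
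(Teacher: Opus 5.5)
Your proposal follows the paper's proof exactly. Both apply \Cref{thm:containers} to $\calH_F$ on the event of \Cref{lem:codegrees} with $\tau=N^{-1/m_2(H)}$, convert $n\tau\log(1/\tau)$ into $O_H(N^{2-1/m_2(H)}\log N)$, and get the ``moreover'' clause because the edges of a copy determine its vertex set.

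The bookkeeping is fine, but there is a genuine problem, shared with the paper, at the step where you invoke \Cref{thm:containers}. As stated there, with $C$ and $\tau_0$ independent of $\alpha$ and only a $\log(1/\alpha)$ loss, the container theorem is false. So is the proposition, read as written uniformly in $0<\alpha<1$. Take $H=K_3$, so $m_2(H)=2$, and take $\alpha<1/e(\calH_F)$. Then $\log(1/\alpha)=O(\log N)$, and item (3) would give $\log|\calC|=O(N^{3/2}\log^2N)$. Every container now spans no hyperedge, i.e.\ is triangle-free, so every maximal triangle-free subgraph of $F$ must itself belong to $\calC$. But with high probability $F$ has $2^{\Omega(N^2)}$ of these:
\begin{enumerate}[label=\textup{(\roman*)}]
\item split $V(F)=X\cup Y$ into halves and take a matching $M$ of size $\Omega(N)$ in $F[X]$;
\item for each of the $\Theta(N^2)$ pairs $(y,x_1x_2)$ with $y\in Y$, $x_1x_2\in M$ and $yx_1,yx_2\in E(F)$, keep exactly one of $yx_1,yx_2$.
\end{enumerate}
Together with $M$, each such graph is triangle-free. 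Any maximal triangle-free extension inside $F$ still contains exactly the chosen edge of each such pair, so distinct choices give distinct maximal triangle-free subgraphs. Hence $\log|\calC|=\Omega(N^2)$.

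The correct form of the container theorem (e.g.\ Saxton--Thomason) has a codegree hypothesis whose tolerance is proportional to $\alpha$. The reason is that the iteration must keep working inside subhypergraphs whose average degree may have dropped to about $\alpha d$. Starting from $\Delta_j\le Ad\tau^{j-1}$, one must therefore run it at $\tau'\asymp A\tau/\alpha$; the case $j=2$ forces this. This gives
\[
\log|\calC|\le C_H\,\alpha^{-1}\log(1/\alpha)\,N^{2-1/m_2(H)}\log N
\qquad\text{whenever }\alpha\ge C_HN^{-1/m_2(H)}.
\]
With this version your argument goes through verbatim, and it is enough downstream. \Cref{thm:thinning} only uses $\alpha\asymp(rM^h)^{-1}=r^{-O_H(r)}$. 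The extra factor $rM^h=r^{O_H(r)}$ is then absorbed into $N^{\eta}$ in the proof of \Cref{thm:main}, after enlarging $C$. However, the proposition as stated, with an $\alpha$-independent $C_H$ and only a $\log(1/\alpha)$ loss, cannot be proved by this route or any other.
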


\begin{proof}
Apply Theorem~\ref{thm:containers} to $\calH_F$, using Lemma~\ref{lem:codegrees} and
$\tau=N^{-1/m_2(H)}$.  Since $|E(F)|=\Theta(N^2)$,
$$
        |E(F)|\tau\log(1/\tau)
        \le C_HN^{2-1/m_2(H)}\log N.
$$
This gives the stated bound on $\log|\calC|$.  Finally, if $H$ has no
isolated vertices, then the edge set of a copy determines its vertex set, and
there are at most $h!$ labelled embeddings on that vertex set.
\end{proof}

\section{Random thinning}

We now combine the supersaturation and container statements.

\begin{theorem}[Random thinning]
\label{thm:thinning}
Let $H$ be fixed with no isolated vertices and $e(H)\ge2$.  Put $\rho=2^{-25v(H)^2}r^{-4}$,
and let $M=\lceil r^{A_Hr}\rceil$, where $A_H$ is as in
Theorem~\ref{thm:acdfm-input}.  Let $F\sim G(N,1/2)$, and let $W\subseteq E(F)$ be
obtained by retaining every edge of $F$ independently with probability
$\theta$.  There is a constant $K_H>0$ such that, if 
$$
        \theta
        \ge K_H\rho MrN^{-1/m_2(H)}\log N\log(rM),
$$
then $(V(F),W)\longrightarrow_{\ind}(H)_r$ with probability tending to one whenever $N^2/M^4\to\infty$.
\end{theorem}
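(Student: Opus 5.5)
Following the container method, we first fix a good host and then apply a union bound over container tuples.

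\textbf{Step 1: fix a good host.} Condition on $F\sim G(N,1/2)$ satisfying three properties:
\begin{itemize}
\item the conclusion of Corollary~\ref{cor:tuple-supersaturation}, which fails with probability at most $\exp\{-cN^2/M^4\}=o(1)$;
\item the conclusions of Lemma~\ref{lem:codegrees};
\item the conclusions of Proposition~\ref{prop:containers}.
\end{itemize}
We apply Proposition~\ref{prop:containers} with
$$
\alpha=\frac{c_H}{2h!\,r\,M^h}\cdot\frac{N^h}{e(\calH_F)},
$$
so that $\log(1/\alpha)=O_H(\log(rM))$. Every container $C\in\calC$ then carries at most $h!\,\alpha\, e(\calH_F)<c_HN^h/(rM^h)$ labelled induced copies of $H$.

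\textbf{Step 2: reduce to container tuples.} Suppose $W\not\to_{\ind}(H)_r$. Take a bad colouring and let $I_1,\dots,I_r$ be its colour classes, so that $W=I_1\cup\dots\cup I_r$.

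Each $I_a$ contains no induced copy of $H$ in $(V(F),W)$. We need this to hold in $F$ itself, and this is the subtle point, since induced in $W$ differs from induced in $F$. If $Q$ is an induced copy of $H$ in $F$ whose edges all lie in $I_a\subseteq W$, then the non-edges of $Q$ are non-edges of $F$, hence of $W$. So $Q$ is induced in $W$ and monochromatic, a contradiction. Thus each $I_a$ is independent in $\calH_F$ and lies in some container $C_a\in\calC$.

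By Corollary~\ref{cor:tuple-supersaturation}, the set
$$
L(C_1,\dots,C_r)=E(F)\setminus(C_1\cup\cdots\cup C_r)
$$
has $|L|\ge c_HN^2/(\rho M)$. But $W\subseteq C_1\cup\dots\cup C_r$, so $W\cap L=\emptyset$.

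\textbf{Step 3: union bound.} For a fixed tuple,
$$
\Prb(W\cap L=\emptyset)=(1-\theta)^{|L|}\le \exp\{-\theta c_HN^2/(\rho M)\}.
$$
The number of tuples is at most $|\calC|^r$, where
$$
\log|\calC|^r\le rC_HN^{2-1/m_2(H)}\log N\cdot O_H(\log(rM)).
$$
The union bound therefore gives a failure probability of at most
$$
\exp\Bigl\{C'_H\,rN^{2-1/m_2(H)}\log N\log(rM)-\theta c_HN^2/(\rho M)\Bigr\}.
$$
With $\theta\ge K_H\rho MrN^{-1/m_2(H)}\log N\log(rM)$ and $K_H$ large, the second term is at least twice the first. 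The exponent is then at most $-C'_HrN^{2-1/m_2(H)}\log N\to-\infty$.

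Adding the $o(1)$ failure probability for the conditioning in Step~1 completes the argument. The main points of care are the independence transfer in Step~2, which uses that all the relevant hyperedges are induced in $F$, and checking that $\alpha$ is expressed via $e(\calH_F)=\Theta_H(N^h)$, so that $\log(1/\alpha)$ is only $O_H(\log (rM))$. Everything else is bookkeeping.
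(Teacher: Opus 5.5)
Your proposal is correct and follows the paper's proof essentially line by line. You condition on the tuple-supersaturation and container properties of $F$, take $\alpha=\Theta_H\bigl((rM^h)^{-1}\bigr)$ so that every container carries fewer than $c_HN^h/(rM^h)$ labelled copies, transfer independence from $(V(F),W)$ to $\calH_F$ via the non-edges of $F$, and union-bound over $\calC^r$ against the thinning missing a complement of size at least $c_HN^2/(\rho M)$. Normalising $\alpha$ by $e(\calH_F)$, conditioning explicitly on Lemma~\ref{lem:codegrees}, and writing out the final exponent are only slightly more careful bookkeeping than the paper's.
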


\begin{proof}
Condition on the conclusions of Corollary~\ref{cor:tuple-supersaturation} and
Proposition~\ref{prop:containers}.  The probability of their failure is
$o(1)$ under the assumption $N^2/M^4\to\infty$.

Choose $\alpha$ small enough that every container contains fewer than
$c_HN^h/(rM^h)$ labelled induced copies of $H$.  By
Proposition~\ref{prop:containers}, and using $e(\calH_F)=\Theta_H(N^h)$, it is enough to
take $\alpha=c'_H(rM^h)^{-1}$ for a sufficiently small constant $c'_H>0$.  Then
$$
        \log |\calC|
        \le C_HN^{2-1/m_2(H)}\log N\log(rM).
$$

Suppose that $(V(F),W)$ has an $r$-colouring with no monochromatic induced
copy of $H$.  Let $I_a\subseteq W$ be the set of edges of colour $a$.
Each $I_a$ is independent in $\calH_F$: if an induced copy of $H$ in
$F$ had all its graph-edges in $I_a$, then the same vertex set would span a
monochromatic induced copy of $H$ in $(V(F),W)$, since the non-edges of the
copy are already absent from $F$.  Hence $I_a\subseteq C_a$ for some
$C_a\in\calC$, and therefore $W\subseteq C_1\cup\cdots\cup C_r$.

By Corollary~\ref{cor:tuple-supersaturation}, every such tuple satisfies
$$
        |E(F)\setminus(C_1\cup\cdots\cup C_r)|
        \ge c_H\frac{N^2}{\rho M}.
$$
For a fixed tuple $(C_1,\ldots,C_r)$, the probability that the random
thinning selects no edge from this complement is at most
$$
        \exp\left\{-c_H\frac{\theta N^2}{\rho M}\right\}.
$$
Taking a union bound over $\calC^r$, the probability that a bad colouring
exists is at most
$$
        \exp\left\{
        C_HrN^{2-1/m_2(H)}\log N\log(rM)
        -c_H\frac{\theta N^2}{\rho M}
        \right\}.
$$
The assumed lower bound on $\theta$ makes the second term dominate the first,
after increasing $K_H$.  This proves the theorem.
\end{proof}

\begin{proof}[Proof of Theorem~\ref{thm:main}]
Fix $H$ and $\eta>0$.  Let $A_H$ and $K_H$ be as in
Theorem~\ref{thm:thinning}, and put $M=\lceil r^{A_Hr}\rceil$ and
$\rho=2^{-25v(H)^2}r^{-4}$.  Since a $\theta$-thinning of $G(N,1/2)$ has
distribution $G(N,\theta/2)$, we set $\theta=2p$.  The upper bound
$p\le1/2$ ensures $\theta\le1$.

By Theorem~\ref{thm:thinning}, it is enough that
$$
        p\ge K_H\rho MrN^{-1/m_2(H)}\log N\log(rM).
$$
Using $p\ge N^{-1/m_2(H)+\eta}$, it suffices that
$$
        N^\eta\ge K_H\rho Mr\log N\log(rM).
$$
Since $M=r^{O_H(r)}$ and $\rho r=2^{-25v(H)^2}r^{-3}$, this inequality
holds for all $N\ge r^{C_{H,\eta}r}$ if $C_{H,\eta}$ is chosen sufficiently
large.  Increasing $C_{H,\eta}$ if necessary also ensures $N^2/M^4\to
\infty$ in the range under consideration, so the error term in
Theorem~\ref{thm:thinning} tends to zero.  Bounded values of $r$ are absorbed into
the same constant.
\end{proof}

\begin{proof}[Proof of Corollary~\ref{cor:size}]
Choose $N=r^{C_{H,\eta}r}$ with $C_{H,\eta}$ sufficiently large and take
$$
        p=N^{-1/m_2(H)+\eta/2}.
$$
By Theorem~\ref{thm:main}, with high probability
$G(N,p)\to_{\ind}(H)_r$.  Chernoff's inequality gives, with high probability,
$$
        e(G(N,p))\le 2p\binom N2\le N^{2-1/m_2(H)+\eta}.
$$
Thus there exists an $N$-vertex induced-Ramsey graph with at most
$N^{2-1/m_2(H)+\eta}$ edges.  Rewriting this bound in terms of $r$ gives
the displayed estimate, after increasing $C_{H,\eta}$ once more if needed.
\end{proof}

\section{Concluding remarks}

The weighted input is used only through the pair-overlap estimate in
Lemma~\ref{lem:pair-overlap}.  If one used merely that a small dense graph is
induced-Ramsey, then the averaging argument would require $M$-vertex sets
containing no deleted edge.  This gives a complement lower bound of order
$N^2/M^2$ for bad tuples of containers.  The pair-overlap estimate instead
allows each good $M$-vertex subgraph to survive the deletion of
$\Theta(M/\rho)$ edges.  Consequently the complement lower bound becomes
$N^2/(\rho M)$, and the thinning threshold changes from
$M^2rN^{-1/m_2(H)}\log N\log(rM)$ to $\rho MrN^{-1/m_2(H)}\log N\log(rM)$.

The theorem is therefore best viewed as a transference statement from the
weighted dense random theorem of \cite{ACDFM}, rather than as a monotone
threshold theorem.  The restriction $p\le1/2$ is not a technical artefact: for
non-complete $H$, adding edges can destroy induced copies of $H$.

\textbf{Acknowledgments.}
The author is grateful to her supervisor, Imre Leader, for his support and guidance, and is supported by the CB European PhD Studentship funded by Trinity College, Cambridge.

\end{document}